\documentclass[11pt]{amsart}
\usepackage{amsmath}
\usepackage{cite}
\usepackage{mathrsfs}
\usepackage{amssymb}
\usepackage{hyperref}
\usepackage{microtype}
\newcommand{\Card}{\mathrm{Card}}
\newcommand{\supp}{\operatorname{supp}}
\newcommand{\wt}{\operatorname{wt}}
\numberwithin{equation}{section}
\newtheorem{theorem}[equation]{Theorem}

\newtheorem{corollary}[equation]{Corollary}
\newtheorem{lemma}[equation]{Lemma}
\newtheorem{claim}[equation]{Claim}

\theoremstyle{remark}
\newtheorem{remark}[equation]{Remark}

\theoremstyle{definition}
\newtheorem{definition}[equation]{Definition}

\title{A Generalisation of Diophantine Tuples}
\author{Zijie Gu}

\begin{document}

\maketitle
\begin{abstract}
    This paper investigates a generalised version of Diophantine tuples in finite fields. 
    By applying Shparlinski's method, we obtain power-saving results on the number of such tuples.
\end{abstract}

\section{Introduction}    

    A Diophantine $m$-tuple over a commutative ring $R$ is a subset $\{a_1, \ldots, a_m\}$ of $R$ such that $a_i a_j + 1$ is a square in $R$ for all $i \ne j$. The study of such sets began with Diophantus of Alexandria, who gave the first example of the set $\left\{\frac{1}{16}, \frac{33}{16}, \frac{17}{4}, \frac{105}{16}\right\}$, in which the product of any two distinct elements, increased by one, is a perfect square over $\mathbb{Q}$. In contrast, the finite fields $\mathbb{F}_q$ provide a simpler setting for such problems. Many researchers have recently obtained interesting results on Diophantine tuples in finite fields~\cite{Dujella,zbMATH07697561,Kim2024,2025Yip,Dixit2022,Guloglu2020,Kim2023}.

    Let $\mathbb{F}_q$ be a finite field of characteristic $q>2$. The fundamental questions that naturally arise are:
\begin{itemize}
\item The existence of Diophantine $m$-tuples in $\mathbb{F}_q$
\item The number $N_m(q)$ of Diophantine $m$-tuples in $\mathbb{F}_q$
\end{itemize}

While determining \(N_{m}(q)\) seems to be an elementary problem, it turns out to be non-trivial as more advanced 
gadgets show up already when \(m = 4\): Dujella and Kazalicki (2021)~\cite{Dujella} showed that the number of Diophantine quadruples over $\mathbb{F}_q$ relates to four non-trivial modular forms.  Instead of finding a closed formula for \(N_{m}(q)\), we may give an asymptotic formula when \(q\) is large.

Dujella and Kazalicki (2021)~\cite{Dujella} established the following
asymptotic formula:
\begin{equation}\label{eq:dk}
    N_m(q) = \frac{q^m}{m! 2^{\binom{m}{2}}} + o(q^m),
\end{equation}

Shparlinski (2023)~\cite{zbMATH07697561} improved this to:
\begin{equation}\label{eq:shparlinski}
    N_m(q) = \frac{q^m}{m! 2^{\binom{m}{2}}} + O(q^{m-1}).
\end{equation}

The general advanced machinery for
estimating \(N_{m}(q)\) is the
so-called ``Lang--Weil estimate''.  Using \eqref{eq:dk} and applying
the Lang--Weil estimate naively, one gets
\begin{equation*}
    N_m(q) = \frac{q^m}{m! 2^{\binom{m}{2}}} + O(q^{m-\frac{1}{2}}).
\end{equation*}
Thus \eqref{eq:shparlinski} is a significant improvement over
\eqref{eq:dk}.

    This paper considers a generalisation of Diophantine tuples in finite fields. There has been significant recent progress on simile studies of $f$-Diophantine sets and related variants by several authors, including Kim, Yip, and Yoo~\cite{Kim2025,2025Yip}, Gyarmati~\cite{Gyarmati2001}, Berczes, Dujella, Hajdu, and Tengely~\cite{zbMATH06608234}, Sadek and El-Sissi~\cite{zbMATH06920373}, as well as Hammonds, Kim, Miller, Nigam, Onghai, Saikia, and Sharma~\cite{zbMATH07661280}, among others.

    \begin{definition}[Admissible polynomial]
        A polynomial $f \in \mathbb{F}_q[x]$ is called \emph{admissible} if it satisfies the following conditions:
        \begin{itemize}
            \item $f$ is non-constant and not a perfect square in $\overline{\mathbb{F}_q[x]}$,
            \item For factorization $f(x) = x^r h(x)$ with $h(0) \neq 0$, the polynomial $h(x)$ is not a perfect square in $\overline{\mathbb{F}_q[x]}$.\footnote{This condition is used in Theorem~\ref{thm:admissible-nonsquare}}
        \end{itemize}
    \end{definition}

    Let $d, m$ be integers with $1 \le d < m \le q$.
    For a finite set $[m]=\{1,\dots,m\}$, we denote by $\binom{[m]}{d}$ the family of all $d$-element subsets.
    For an admissible polynomial $f$, we define:

    \begin{definition}
    A \emph{$d$-$f$-Diophantine $m$-tuple} is a set
    $\{a_1,\ldots,a_m\} \subset \mathbb{F}_q$
    such that for every $I = \{i_1,\ldots,i_d\} \in \binom{[m]}{d}$, the element
    $f(a_{i_1} \cdots a_{i_d})$
    is a square in $\mathbb{F}_q$.
    \end{definition}

    In this paper, we study the number of $d$-$f$-Diophantine $m$-tuples in $\mathbb{F}_q$. Since exactly half of the nonzero elements in $\mathbb{F}_q^*$ are squares, the probability that a randomly chosen element from $\mathbb{F}_q^*$ is a square is $\frac{1}{2}$. Therefore, it can be expected that, for a randomly chosen $m$-tuple, the probability that it is a $d$-$f$-Diophantine $m$-tuple is $\frac{1}{2^{\binom{m}{d}}}$. Thus, the expected number of $d$-$f$-Diophantine $m$-tuples in $\mathbb{F}_q$ is $\frac{q^m}{m! \, 2^{\binom{m}{d}}}$.
    
    This leads to the main result of this paper:
    \begin{theorem}\label{mainthe}
        The number $N_f^{\binom{[m]}{d}}(q)$ of $d$-$f$-Diophantine $m$-tuples, where $f$ is an admissible polynomial, satisfies
        \begin{equation*}
            N_f^{\binom{[m]}{d}}(q) = 
            \frac{q^m}{m! \, 2^{\binom{m}{d}}} +
            \begin{cases}
             O\left(q^{m -\frac{1}{2}}\right), & \text{if } \deg(f)\geq 2, \\
             O\left(q^{m -1}\right), & \text{if } \deg(f)=1
            \end{cases}
        \end{equation*}
        Moreover, the implied constant depends on $\deg(f)$, $m$ and $d$ when $\deg(f)\geq 2$.
    \end{theorem}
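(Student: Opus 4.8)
The plan is to run Shparlinski's character‑sum method, reducing the count to estimates for character sums over $\mathbb{F}_q^m$ attached to the quadratic character $\chi$ of $\mathbb{F}_q$.

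\emph{Reduction and expansion.} First I would pass from sets to ordered tuples: $N_f^{\binom{[m]}{d}}(q)=\frac1{m!}M$, where $M$ counts ordered $(a_1,\dots,a_m)\in\mathbb{F}_q^m$ with distinct entries such that $f(\prod_{i\in S}a_i)$ is a square for every $S\in\binom{[m]}{d}$. Dropping the distinctness constraint costs $O(q^{m-1})$. Since $\tfrac{1+\chi(b)}2$ is the indicator of ``$b$ is a nonzero square'' and differs from the indicator of ``$b$ is a square'' only at $b=0$, and since the locus where some $f(\prod_{i\in S}a_i)=0$ has $O(q^{m-1})$ points, I obtain
\[
M=\sum_{\mathbf a\in\mathbb{F}_q^m}\ \prod_{S\in\binom{[m]}{d}}\frac{1+\chi\big(f(\prod_{i\in S}a_i)\big)}2\ +\ O(q^{m-1}).
\]
Expanding the product over subsets $T\subseteq\binom{[m]}{d}$ and using multiplicativity of $\chi$ gives $M=2^{-\binom md}\sum_T W(T)+O(q^{m-1})$ with $W(T)=\sum_{\mathbf a}\chi\big(\prod_{S\in T}f(\prod_{i\in S}a_i)\big)$; the term $T=\varnothing$ produces $W(\varnothing)=q^m$, i.e. the main term $\tfrac{q^m}{m!\,2^{\binom md}}$, and everything reduces to showing $W(T)$ is an error term for each nonempty $T$.

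\emph{The main estimate.} For nonempty $T$ put $F_T(\mathbf a)=\prod_{S\in T}f(\prod_{i\in S}a_i)$, a polynomial of degree $|T|\deg f\ge1$. The heart of the proof is the claim that \emph{$F_T$ is not a constant times the square of a polynomial in $\overline{\mathbb F}_q[a_1,\dots,a_m]$.} Granting this, the affine hypersurface $y^2=F_T(\mathbf a)$ is geometrically irreducible, so by Lang--Weil it has $q^m+O(q^{m-1/2})$ points with implied constant depending only on $m,d,\deg f$; as this count equals $\sum_{\mathbf a}(1+\chi(F_T(\mathbf a)))=q^m+W(T)$, we get $|W(T)|=O(q^{m-1/2})$, and summing over the $2^{\binom md}$ choices of $T$ yields the $\deg f\ge2$ case. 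To prove the claim I would factor $f=c\,g^2h$ with $h$ squarefree, so $F_T=c^{|T|}\big(\prod_S g(m_S)\big)^2\prod_S h(m_S)$ where $m_S=\prod_{i\in S}a_i$, and then analyze $\prod_{S\in T}h(m_S)$ over $\overline{\mathbb F}_q$: writing $h=\prod_\theta(x-\theta)$ with distinct $\theta$, each $m_S-\theta$ with $\theta\ne0$ is irreducible, and for distinct pairs $(S,\theta),(S',\theta')$ with $\theta,\theta'\ne0$ these are non‑associate, so after pulling out the (square) contribution of the linear factors $a_i$ coming from a possible root $\theta=0$, the remainder is a squarefree polynomial which is nonconstant precisely because $f$ is non‑square (the case where $0$ is the only odd‑multiplicity root being handled by noting the remnant is then $\prod_{i:\,n_i\text{ odd}}a_i$ with $n_i=\#\{S\in T:i\in S\}$).

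\emph{The linear case and the main obstacle.} When $\deg f=1$, Lang--Weil only gives $O(q^{m-1/2})$, short of the claimed $O(q^{m-1})$, so instead I would fiber: pick $j$ lying in some member of $T$ and sum over $a_j$ last. For fixed $(a_i)_{i\ne j}$, $F_T$ equals an $a_j$‑free constant times $\prod_{S\in T,\,j\in S}f\big(a_j\prod_{i\in S\setminus\{j\}}a_i\big)$, which for all $(a_i)_{i\ne j}$ outside a set of size $O(q^{m-2})$ is a squarefree nonconstant polynomial in $a_j$ (if a single factor survives it is linear, forcing $\sum_{a_j}\chi=0$); Weil's bound contributes $O(1)$ per good fiber and the $O(q^{m-2})$ bad fibers contribute $\le q$ each, giving $|W(T)|=O(q^{m-1})$ with implied constant depending only on $m,d$. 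The step I expect to be genuinely delicate is the squarefreeness/non‑squareness claim for $F_T$: controlling which substituted monomials $m_S$ make the various $h(m_S)$ share factors, and in particular the behaviour of the root $\theta=0$, is exactly where the hypothesis on $f$ must be used with care, and it is what pins down the precise notion of ``non‑square'' needed for the theorem.
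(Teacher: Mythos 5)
Your reduction to the character sums $W(T)$ is exactly the paper's (its $R(\epsilon)$ is your $W(T)$), and for $\deg f\geq 2$ your Lang--Weil route is a workable alternative: there the theorem only claims an error of $O(q^{m-1/2})$, so bounding every nonempty $W(T)$ that crudely suffices, whereas the paper reserves $O(q^{m-1/2})$ for the $|T|=1$ sums (collapsing $\prod_{i\in S}a_i$ to a single variable and applying the one-variable Weil bound) and proves the sharper $O(q^{m-1})$ for $|T|\geq 2$. The genuine gap is in your $\deg f=1$ argument, where you must show $|W(T)|=O(q^{m-1})$ for every nonempty $T$. The assertion that ``Weil's bound contributes $O(1)$ per good fiber'' is false: for a squarefree polynomial of degree $t$ in $a_j$ the Weil bound is $(t-1)q^{1/2}$, which is $O(1)$ only when $t\leq 2$. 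Here $t=\#\{S\in T: j\in S\}$, and there are admissible $T$ for which every index lies in at least three members --- e.g.\ $T=\binom{[m]}{d}$ itself whenever $\binom{m-1}{d-1}\geq 3$, such as $m=4$, $d=2$ --- so no choice of $j$ rescues the argument and your bound degrades to $O(q^{m-1/2})$, precisely the barrier you need to beat. The missing idea is Shparlinski's averaging device, which the paper applies to all $|T|\geq 2$: pick an index (say $1$) lying in some but not all members of $T$; for each $b\in\mathbb{F}_q^{*}$ the substitution $(a_1,\dots,a_m)\mapsto\bigl(a_1b^{-(d-1)},a_2b,\dots,a_mb\bigr)$ permutes $\mathbb{F}_q^m$, fixes $\prod_{i\in S}a_i$ for $S\ni 1$, and multiplies it by $b^d$ for $S\not\ni 1$. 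Averaging over $b$ and summing over $a_1$ factors the inner sum into a product of two one-variable character sums (one in $a_1$, one in $b$); outside a set of $(a_2,\dots,a_m)$ of size $O(q^{m-2})$ each factor is $O(q^{1/2})$ by Weil, giving $\frac{1}{q-1}\cdot q^{m-1}\cdot O(q^{1/2})\cdot O(q^{1/2})=O(q^{m-1})$. Without some such second averaging variable, fibering plus Weil cannot produce the claimed error term.

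Separately, the difficulty you flag around the root $\theta=0$ is not merely delicate; for some $f$ permitted by the stated hypotheses it is fatal, to your argument and to the paper's alike. If the only odd-multiplicity root of $f$ over $\overline{\mathbb{F}}_q$ is $0$ and every index lies in an even number of members of $T$, then $F_T$ is an exact square: for $f(x)=x$, $d=2$, $m=3$ and $T=\binom{[3]}{2}$ one gets $F_T=(a_1a_2a_3)^2$, so $W(T)=(q-1)^3\asymp q^m$, and indeed a direct count gives $\sim q^3/24$ tuples rather than the predicted $q^3/48$. The paper's own non-squareness witness $F(a)=f(a)f(0)^{|\mathscr{B}|-1}$ degenerates to the zero polynomial when $f(0)=0$, so its Lemma 4.2 has the same blind spot. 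Your factorization $f=cg^2h$ with $h$ squarefree is the right tool for isolating exactly which $f$ must be excluded, but as written neither your claim that $F_T$ is never a constant times a square, nor the theorem itself, holds for all non-square non-constant $f$.
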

    
    To get the formula, we use products of multiplicative characters, specifically the quadratic character on $\mathbb{F}_q$, which detects whether an element is a square. The proof mainly uses the Weil bound for character sums. A method of Shparlinski~\cite{zbMATH07697561} is particularly effective for the case when multiple characters are multiplied together. This enables us to improve the error term by a factor of $q^{1/2}$ compared to direct applications of the Weil estimate.

    \subsection*{Notation}
    We use the following notation throughout the paper.
    
    We follow the usual Landau notation $O(\cdot)$ and $o(\cdot)$.  
    
    For a finite set $S$, we write $\Card(S)$ for its cardinality.  
    
    For integers $m\ge 1$ and $d\ge 0$, we denote by $\binom{[m]}{d}$ the family of 
    all $d$-element subsets of $[m]=\{1,\dots,m\}$.
    
    For a vector $\varepsilon=(\varepsilon_i)$, we set
    \[
    \supp(\varepsilon)=\{\,i:\varepsilon_i\neq 0\,\},\qquad
    \wt(\varepsilon)=\Card(\supp(\varepsilon)).
    \]
  
\section{The Weil estimate}
Let us recall one form of the Weil estimate\cite{zbMATH02121181} in its simplest form.

\begin{definition}
A \emph{multiplicative character} on $\mathbb{F}_q$ is a map
\[
\chi : \mathbb{F}_q \to \mathbb{C}
\]
such that
\begin{itemize}
    \item $\chi(0) = 0$,
    \item $\chi(a) \neq 0$ for all $a \in \mathbb{F}_q \setminus \{0\}$,
    \item $\chi(ab) = \chi(a)\chi(b)$ for all $a, b \in \mathbb{F}_q \setminus \{0\}$.
\end{itemize}

The \emph{order} of $\chi$ is the smallest positive integer $e$ such that $\chi^e(a) = 1$ for all $a \in \mathbb{F}_q \setminus \{0\}$.
\end{definition}

\begin{theorem}[cf.~{\cite[Theorem 11.23]{zbMATH02121181}}]
Let $\chi$ be a multiplicative character of order $e$ on $\mathbb{F}_q$. Let $f \in \mathbb{F}_q[x]$ be a polynomial such that there does not exist $g \in \overline{\mathbb{F}}_q[x]$ with $f = g^e$. Then
\[
\left| \sum_{x \in \mathbb{F}_q} \chi(f(x)) \right| \leq (\deg{f}-1) q^{1/2}.
\]
\end{theorem}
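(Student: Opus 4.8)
The plan is to read the sum off from an associated $L$-function and then invoke the Riemann Hypothesis for curves over finite fields. Write $d = \deg f$, and note that since $\mathbb{F}_q^\times$ is cyclic of order $q-1$, a character $\chi$ of order $e$ forces $e \mid q-1$. The natural geometric object is the superelliptic curve
\[
C \colon y^e = f(x)
\]
over $\mathbb{F}_q$. The basic counting identity is that, for $a \in \mathbb{F}_q^\times$, the number of $y \in \mathbb{F}_q$ with $y^e = a$ equals $\sum_{j=0}^{e-1}\chi^j(a)$ (this is $e$ when $a$ is an $e$-th power, i.e. $\chi(a)=1$, and $0$ otherwise, using $\chi(a)^e=1$), while each zero of $f$ contributes the single solution $y=0$. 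Summing over $x \in \mathbb{F}_q$ gives
\[
\#C(\mathbb{F}_q) = q + \sum_{j=1}^{e-1} S_j, \qquad S_j := \sum_{x \in \mathbb{F}_q}\chi^j(f(x)),
\]
up to a bounded correction supported on the zeros of $f$. The Weil point-count estimate already controls the aggregate $\sum_j S_j$, but the theorem demands the single term $S_1$, so a finer tool is needed.

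To isolate $S_1$ I would pass to all finite extensions simultaneously. For each $n \geq 1$ extend $\chi$ to $\mathbb{F}_{q^n}$ by $\chi^{(n)} = \chi \circ \mathrm{Nm}_{\mathbb{F}_{q^n}/\mathbb{F}_q}$, set $S_1^{(n)} = \sum_{x \in \mathbb{F}_{q^n}} \chi^{(n)}(f(x))$, and assemble the $L$-function
\[
L(T) = \exp\!\left(\sum_{n=1}^{\infty} S_1^{(n)}\,\frac{T^n}{n}\right).
\]
The two structural facts I would establish are: (i) \emph{rationality and degree}, that $L(T)$ is a polynomial $L(T) = \prod_{i}(1-\alpha_i T)$ of degree at most $d-1$ (here the hypothesis that $f$ is not an $e$-th power in $\overline{\mathbb{F}}_q[x]$ is exactly what makes the function $\chi\circ f$ nontrivial, so that the trivial factors $1-T$ and $1-qT$ attached to the constant sheaf do not occur); and (ii) the \emph{Riemann Hypothesis}, that every inverse root satisfies $|\alpha_i| = q^{1/2}$. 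Granting these and comparing the coefficients of $T^1$ in the logarithmic expansion yields $S_1 = -\sum_i \alpha_i$, whence $|S_1| \leq (d-1)\,q^{1/2}$, which is the sharp bound; the weaker $O(q^{1/2})$ form is immediate.

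The main obstacle is input (ii), the Riemann Hypothesis for curves over finite fields: this is Weil's theorem and is genuinely deep, so I would treat it as the cited black box, since it is precisely the content of the reference. Its companion (i) is comparatively routine and follows from either the cohomological formalism or the classical function-field theory. As a self-contained alternative that sidesteps algebraic geometry, one could instead run Stepanov's method: build an auxiliary polynomial vanishing to high order at the points $x$ where $\chi(f(x))$ takes a fixed value, bound its degree against the count of such $x$, and extract square-root cancellation directly; this recovers the same order of magnitude (possibly with a larger constant) through an elementary but delicate counting argument. Either route reduces everything to the single hard fact that the Frobenius eigenvalues have size $q^{1/2}$, with the remaining steps being bookkeeping.
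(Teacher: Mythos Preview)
The paper does not prove this theorem at all: it is stated as a quoted result, attributed to \cite[Theorem 11.23]{zbMATH02121181}, and then used as a black box in the estimates of Lemmas~\ref{lemma:epsilon1} and~\ref{lemma:epsilon-greater-1}. There is therefore no ``paper's own proof'' to compare your proposal against.

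That said, your outline is the standard route to the Weil bound and is essentially correct in shape: pass to the $L$-function $L(T)=\exp\bigl(\sum_n S_1^{(n)}T^n/n\bigr)$, establish that it is a polynomial of degree at most $d-1$ (this is where the non-$e$-th-power hypothesis enters), and then invoke the Riemann Hypothesis for curves to bound the inverse roots. One small caveat: you assert $|\alpha_i|=q^{1/2}$ for every inverse root, but in general one only has $|\alpha_i|\le q^{1/2}$ (some roots can degenerate, e.g.\ when $f$ has repeated factors or when the associated cover is not geometrically irreducible); fortunately the inequality is all that is needed for $|S_1|\le (d-1)q^{1/2}$. Your alternative via Stepanov's method is also a legitimate path and would give the same order of magnitude. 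Either way, you are supplying a proof where the paper deliberately chose to cite one.
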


\section{Conversion from Diophantine Tuples to Character Sums}
In this section, we transform the problem of counting the number of Diophantine tuples into character sums.

Let $f$ be an admissible polynomial in $\mathbb{F}_q[x]$ and 
$\mathscr{A}=\binom{[m]}{d}$. We consider the system:
\begin{equation}\label{eqf}
    \text{For all } A\in\mathscr A,\ \text{there exists } b_A\in\mathbb F_q 
    \text{ such that } 
    f\!\left(\prod_{i\in A} a_i\right)=b_A^2.
\end{equation}
Let $D$ be the set of all solutions to \eqref{eqf} in $\mathbb F_q$.  
Then the map
\[
\pi:D\to\mathbb F_q^{m},\qquad
\pi(a_1,\dots,a_m,b_{A_1},\dots,b_{A_{\binom{m}{d}}})=(a_1,\dots,a_m)
\]
is ``$2^{\binom{m}{d}}$-to-$1$''.%
\footnote{The quotation marks indicate that the mapping fails to be 
$2^{\binom{m}{d}}$-to-$1$ when some $b_A=0$, but such cases occur only 
$O(q^{m-1})$ times; see below.}
Indeed, given any $(a_1,\dots,a_m)\in\mathbb F_q^m$, if 
$(a_1,\dots,a_m,b_{A_1},\dots,b_{A_{\binom{m}{d}}})$ is a solution, then so 
is $(a_1,\dots,a_m,\pm b_{A_1},\dots,\pm b_{A_{\binom{m}{d}}})$.

Let $\mathrm{Conf}_m(\mathbb F_q)\subset\mathbb F_q^m$ be the subset of 
$m$-tuples $(a_1,\dots,a_m)\in\pi(D)$ satisfying $a_i\neq a_j$ for all 
$i<j$.

For each fixed $A\in\mathscr A$, the condition $b_A=0$ forces
$f\!\left(\prod_{i\in A} a_i\right)=0$, so $\prod_{i\in A} a_i$ must lie in 
the finite set $f^{-1}(0)$.  This gives a single algebraic constraint on 
$(a_1,\dots,a_m)$ and hence defines a hypersurface in $\mathbb F_q^m$ of 
size $O(q^{m-1})$.  A union bound over $A$ shows that the total number of 
solutions with $b_A=0$ for some $A$ is $O(q^{m-1})$.

Let $D^*=\pi^{-1}(\mathrm{Conf}_m(\mathbb F_q))$.  
Then
\[
N_f^{\binom{[m]}{d}}(q)
=
\frac{\mathrm{Card}(D^*)}{m!\,2^{\binom{m}{d}}}
+O(q^{m-1}),
\]
where the error term accounts for the solutions with $b_A=0$ for some $A$.
Since $\mathrm{Card}(\mathbb F_q^m\setminus\mathrm{Conf}_m(\mathbb F_q))
=O(q^{m-1})$. To prove Theorem~\ref{mainthe}, it suffices to show:
\begin{claim}\label{claim}
\[
\Card(D) = q^m +
\begin{cases}
O(q^{m-\frac{1}{2}}), & \deg(f) \ge 2,\\[4pt]
O(q^{m-1}), & \deg(f) = 1.
\end{cases}
\]
\end{claim}

To count the number of solutions to the system \eqref
{eqf}, we utilise the quadratic character $\chi$ on 
$\mathbb{F}_q$, the unique multiplicative character of 
order 2. For any $a \in \mathbb{F}_q$, the equation $x^2 = a$ has exactly $\chi(a) + 1$ solutions in $\mathbb{F}_q$.

Thus, the total number of solutions to \eqref{eqf} can be written as a product over all $A \in \mathscr{A}$ of terms of the form $\chi\left(f\left(\prod_{i \in A} a_i\right)\right) + 1$. Consequently,

\begin{align}
    \mathrm{Card}(D) &= \sum_{a_1, a_2, \ldots, a_m \in \mathbb{F}_q} \prod_{A \in \mathscr{A}} \left(1 + \chi\left(f\left(\prod_{i \in A} a_i\right)\right)\right) \nonumber \\
    &= q^m + \sum_{\varepsilon \in \{0,1\}^{\binom{m}{d}},\, \varepsilon \neq 0} R(\varepsilon) \label{eq:cardD}
\end{align}

where, for $\varepsilon = (\varepsilon_A)_{A \in \mathscr{A}} \in \{0,1\}^{\binom{m}{d}}$.
\begin{equation}\label{eqR}
    R(\varepsilon) = \sum_{a_1, a_2, \ldots, a_m \in \mathbb{F}_q} \prod_{A \in \mathscr{A}} \chi\left(f\left(\prod_{i \in A} a_i\right)\right)^{\varepsilon_A}
\end{equation}

\section{Evaluations on \(R(\varepsilon)\)}
In this section we divide the two cases $\wt(\varepsilon)= 1$ and $\wt(\varepsilon)\geq2$.
\begin{lemma}\label{lemma:epsilon1}
    If $\wt(\varepsilon)= 1$, i.e. only one component $\varepsilon_A=1$ and all others are zero, then
    \[
    R(\varepsilon) = \sum_{a_1,a_2,\ldots, a_m\in \mathbb{F}_q}\chi\!\left(f\!\left(\prod_{i\in A}a_i\right)\right) 
    \;\leq\; \left(\deg(f)-1\right) q^{m-\tfrac{1}{2}}+ (d-1) q^{m-1}
    \]
    where $\chi$ is the quadratic character on $\mathbb{F}_q$ and $f$ is a fixed admissible polynomial.
\end{lemma}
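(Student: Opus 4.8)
The plan is to reduce the $m$-fold sum over $(a_1,\dots,a_m)$ to a single one-variable character sum in the variable $a_i$ for some $i \in A$, and then apply the Weil estimate (Theorem in Section 2). Fix the unique set $A$ with $\epsilon_A = 1$ and pick any index $i_0 \in A$. Write $\prod_{i \in A} a_i = a_{i_0} \cdot c$ where $c = \prod_{i \in A \setminus \{i_0\}} a_i$. The summand $\chi(f(\prod_{i\in A} a_i))$ does not depend on the variables $a_j$ for $j \notin A$, so summing over those $m - d$ free variables contributes a clean factor of $q^{m-d}$. It remains to understand $\sum_{a_j : j \in A} \chi(f(a_{i_0} c))$, a sum over $d$ variables.

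First I would split according to whether $c = 0$ or $c \neq 0$. If $c = 0$ (which forces $a_j = 0$ for some $j \in A \setminus \{i_0\}$), there are $O(q^{d-2})$ choices of $(a_j)_{j \in A\setminus\{i_0\}}$ with this property, and for each the inner sum over $a_{i_0}$ is $\sum_{a_{i_0}} \chi(f(0)) = \chi(f(0)) \cdot q$, which is $O(q)$ in absolute value; this whole block is $O(q^{d-1})$, hence $O(q^{m-1})$ after reinstating the $q^{m-d}$ factor — this is the source of the $(d-1)q^{m-1}$ term. If $c \neq 0$, then for fixed nonzero $c$ the substitution $x = a_{i_0} c$ is a bijection on $\mathbb{F}_q^\times$ (and fixes $0$), so $\sum_{a_{i_0} \in \mathbb{F}_q} \chi(f(a_{i_0} c)) = \sum_{x \in \mathbb{F}_q} \chi(f(x)) + O(1)$, where the $O(1)$ accounts for the single point $a_{i_0}=0$ versus $x=0$ bookkeeping; actually since the map $a_{i_0}\mapsto a_{i_0}c$ is a genuine bijection of $\mathbb{F}_q$ when $c\ne 0$, this is exactly $\sum_{x \in \mathbb{F}_q}\chi(f(x))$. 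Since $f$ is non-square of degree $\deg(f)$, the Weil bound gives $|\sum_{x}\chi(f(x))| \le (\deg(f)-1)q^{1/2}$. Summing this over the at most $q^{d-1}$ choices of $(a_j)_{j \in A \setminus \{i_0\}}$ with all $a_j \ne 0$ yields a contribution bounded by $(\deg(f)-1)q^{d-1}\cdot q^{1/2} = (\deg(f)-1)q^{d-\frac12}$, and multiplying by $q^{m-d}$ gives $(\deg(f)-1)q^{m-\frac12}$.

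Combining the two blocks gives $|R(\epsilon)| \le (\deg(f)-1)q^{m-\frac12} + (d-1)q^{m-1}$, as claimed (the constant $d-1$ in the second term is a mild overcount of the $O(q^{d-2})$ bad tuples times the trivial bound, which I would make explicit by noting there are at most $(d-1)q^{d-2}$ tuples $(a_j)_{j\in A\setminus\{i_0\}}$ having some coordinate zero). The only genuinely delicate point is making sure the substitution argument is applied correctly: the bijection $a_{i_0} \mapsto a_{i_0} c$ only makes sense for $c \ne 0$, which is exactly why the $c = 0$ case must be peeled off first. I do not expect any real obstacle here — this is the base case $|\epsilon| = 1$ and is essentially a direct invocation of Weil; the harder work (Shparlinski's method to avoid losing a factor of $q^{1/2}$ when many characters multiply) will come in the treatment of $|\epsilon| \ge 2$.
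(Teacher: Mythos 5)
Your proof is correct and takes essentially the same route as the paper's: both factor out $q^{m-d}$ from the free variables and reduce to the one-variable Weil bound, and your split into $c=0$ versus $c\ne 0$ with the bijection $a_{i_0}\mapsto a_{i_0}c$ reproduces exactly the paper's count $N(x)=(q-1)^{d-1}$ for $x\ne 0$ together with its $\bigl(q^d-q(q-1)^{d-1}\bigr)\chi(f(0))$ correction term. No gaps.
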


\begin{proof}
    In this case, since $\wt(\varepsilon) = 1$, there is a unique subset $A \subset [m]$ with $\Card(A)=d$ for which $\varepsilon_A = 1$. Thus, the sum in $R(\varepsilon)$ depends only on the $a_i$s for $i \in A$, while the remaining $a_j$s with $j \notin A$ contribute a factor of $q^{m-d}$.

    \begin{align}
            \sum_{a_1,a_2,\ldots, a_m\in \mathbb{F}_q}\chi\!\left(f\!\left(\prod_{i\in A}a_i\right)\right)
            &= q^{m-d}\cdot \sum_{a_i\in \mathbb{F}_q,\, i\in A} \chi\!\left(f\!\left(\prod_{i\in A}a_i\right)\right) \nonumber \\
            &= q^{m-d} \cdot \sum_{x\in\mathbb{F}_q} N(x)\, \chi\!\left(f(x)\right) \label{eq:sumNxchi}
    \end{align}
    where $N(x)$ denotes the number of $d$-tuples $(a_i)_{i\in A}\in (\mathbb{F}_q)^d$ satisfying $\prod\limits_{i\in A} a_i = x$. Clearly,
    \[
    N(x)=
    \begin{cases}
            (q-1)^{d-1}, & \text{if } x\neq 0,\\[6pt]
            q^d-(q-1)^{d}, & \text{if } x=0.
    \end{cases}
    \]
    
    Applying the Weil estimate, we obtain:
    \[
    \Biggl|\sum_{x\in\mathbb{F}_q} \chi\!\left(f(x)\right)\Biggr| \;\leq\; (\deg(f)-1)\, q^{\tfrac{1}{2}}.
    \]
    Using this in \eqref{eq:sumNxchi}
    \begin{align*}
        \sum_{a_1,\dots,a_m} \chi\left(f\left(\prod_{i\in A}a_i\right)\right) &= q^{m-d} \cdot (q-1)^{d-1} \cdot \sum_{x\in\mathbb{F}_q} \chi\left(f(x)\right) \\
        &\quad + q^{m-d} \cdot \left(q^d- q(q-1)^{d-1}\right) \cdot \chi \left(f(0)\right) \\
        &\leq (\deg(f)-1) q^{m-\tfrac{1}{2}} + (d-1) q^{m-1}\qedhere
    \end{align*}

\end{proof}

\begin{remark}
    When $\wt(\varepsilon)= 1$, the precise bound for $R(\varepsilon)$ mainly depends on the degree of the admissible polynomial $f$. If $\deg(f)=1$, then $R(\varepsilon) = O(q^{m-1})$, where the implied constant depends on $d$. If $\deg(f)>1$, then $R(\varepsilon) = O(q^{m-\frac{1}{2}})$, where the implied constant depends on $\deg(f)$.
\end{remark}

\begin{lemma}\label{lemma:epsilon-greater-1}
    Suppose $\wt(\varepsilon) = k > 1$, i.e., more than one component $\varepsilon_A = 1$, where $k$ denotes the number of subsets $A_1, \ldots, A_k \subset [m]$ of size $d$ with $\varepsilon_{A_j} = 1$. Then,
    \[
    R(\varepsilon) = \sum_{a_1,\dots,a_m\in \mathbb{F}_q} \prod_{j=1}^{k} \chi\left(f\left(\prod_{i\in A_j} a_i\right)\right) = O(q^{m-1}),
    \]
    where $\chi$ is the quadratic character on $\mathbb{F}_q$, $f$ is a fixed admissible polynomial, and $k$ is the number of nonzero entries in $\varepsilon$.
\end{lemma}

The following proof of Lemma~\ref{lemma:epsilon-greater-1} uses a method initially proposed by Shparlinski~\cite{zbMATH07697561}. When $\wt(\varepsilon)>1$, a direct application of the Weil bound yields only the trivial estimate $O(q^{m-1/2})$. The key idea is to introduce an auxiliary variable $b \in \mathbb{F}_q^*$ and exploit the multiplicative structure to separate the sum into two independent parts:
    \begin{enumerate}
        \item Find an index $a \in [m]$ that appears in some but not all of the sets $A_j$, partitioning them into groups $\mathscr{B}$ and $\mathscr{C}$.
        \item Summing over all $b\in\mathbb{F}_q^*$ and using the substitution
\(
(a_1,\dots,a_m)\longmapsto (a_1/b^{d-1},\, a_2 b,\,\dots,\, a_m b),
\)
which permutes $\mathbb{F}_q^m$, transforms the expression into a product of two independent character sums, one in $a_1$ and the other in $b$, where each yields a $q^{-1/2}$ power saving.
    \end{enumerate}
    This improves the error term from $O(q^{m-1/2})$ to $O(q^{m-1})$ by reducing the problem to two independent one-variable character sums.

\begin{theorem}[cf.~{\cite[I.~7, Prop. 1]{zbMATH00193787}}]
\label{theoremld}
	Let $K$ be an algebraically closed field and $V$ be an algebraic variety in $K^n$. Then $\dim V \leq n$. If $V \subsetneqq K^n$, then $\dim V < n$.
\end{theorem}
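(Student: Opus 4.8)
The plan is to reduce to the irreducible case and compute dimension by transcendence degree of the function field. I would take as known the two standard facts of affine dimension theory: (i) for a finitely generated $k$-algebra $R$ which is an integral domain, $\dim R = \operatorname{trdeg}_k \operatorname{Frac}(R)$ (a consequence of Noether normalization), and (ii) the dimension of an algebraic set equals the maximum of the dimensions of its irreducible components. These are the only substantive inputs, and I would cite them rather than reprove them.

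Write $V = V_1 \cup \cdots \cup V_r$ for the irreducible decomposition, with $V_i = V(\mathfrak{p}_i)$ for primes $\mathfrak{p}_i \subseteq k[x_1,\dots,x_n]$ (if \emph{variety} is understood to mean irreducible, this step is vacuous). The coordinate ring of $V_i$ is $k[x_1,\dots,x_n]/\mathfrak{p}_i$, whose fraction field $k(V_i)$ is generated over $k$ by the $n$ residue classes $\bar{x}_1,\dots,\bar{x}_n$; hence $\operatorname{trdeg}_k k(V_i) \le n$, so $\dim V_i \le n$, and therefore $\dim V = \max_i \dim V_i \le n$.

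For the strict inequality, note that $k^n$ is irreducible because $k[x_1,\dots,x_n]$ is a domain, so if $V \subsetneq k^n$ then no component $V_i$ equals $k^n$, i.e. every $\mathfrak{p}_i \ne (0)$. Picking any nonzero $g \in \mathfrak{p}_i$ and viewing it as a nonzero polynomial in $n$ variables, the identity $g(\bar{x}_1,\dots,\bar{x}_n) = 0$ holds in $k(V_i)$ and is a nontrivial algebraic relation among $\bar{x}_1,\dots,\bar{x}_n$ over $k$; hence these $n$ elements are algebraically dependent and $\operatorname{trdeg}_k k(V_i) \le n-1$. Thus $\dim V_i \le n-1$ for all $i$, giving $\dim V \le n-1 < n$. (Alternatively, one can avoid transcendence degrees entirely and read off $\dim V(g) = n-1$ from Krull's principal ideal theorem in $k[x_1,\dots,x_n]$.)

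The only real obstacle is fact (i) — equivalently, the full dimension theory of finitely generated $k$-algebras via Noether normalization or the Hauptidealsatz — which is genuinely nontrivial but completely standard. Everything else is bookkeeping with irreducible components together with the elementary observation that a nonzero polynomial in $n$ indeterminates furnishes a nontrivial algebraic dependence.
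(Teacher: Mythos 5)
The paper does not prove this statement at all: it is quoted verbatim from the cited reference ([\S 7, Prop.\ 1] of the algebraic geometry text) and used as a black box, so there is no in-paper argument to compare yours against. Your proof is correct and standard: the reduction to irreducible components, the bound $\operatorname{trdeg}_k k(V_i)\le n$ because the function field is generated by the $n$ coordinate classes, and the observation that a nonzero $g\in\mathfrak{p}_i$ forces an algebraic dependence (hence $\operatorname{trdeg}\le n-1$) are all sound, granting the two standard facts you explicitly isolate as inputs. One tiny remark: for the step ``$V\subsetneq k^n$ implies no $V_i=k^n$'' you do not need irreducibility of $k^n$ (containment alone suffices); what you do implicitly use is that $I(k^n)=(0)$, which holds because an algebraically closed field is infinite --- worth stating since the paper works over $\overline{\mathbb{F}_q}$.
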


\begin{corollary}\label{corld}
    For a polynomial $g(l_1,\ldots , l_t;x)$ over $K=\overline{\mathbb{F}_q}$ with $t=m-1, l_i =a_{i+1}$ for $i=1,\ldots,t$. Let
    \[
    A=\{(l_1,\ldots,l_t)\in K^{t}\mid g(l_1,\ldots,l_t;x)\ \text{is a square polynomial over}\ K\}.
    \]
    If $\overline{A}\ne \varnothing$, then $\mathrm{Card}\bigl(A\bigr)=O(q^{t-1})$.
\end{corollary}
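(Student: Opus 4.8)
The plan is to realize $A$ as a Zariski-closed subset of $k^{m-1}$, bound its dimension via Theorem~\ref{theoremld}, and then count its $\mathbb{F}_q$-points by a standard estimate. (Here I read $\overline{A}$ as the complement $k^{m-1}\setminus A$, so that the hypothesis says $A\neq k^{m-1}$; and $\mathrm{Card}(A)$ means the number of $\mathbb{F}_q$-rational points of $A$, which is what the bound $O(q^{m-2})$ refers to, the polynomial $f$ being assumed defined over $\mathbb{F}_q$ as in the intended application.) Write $f(x)=\sum_{i=0}^{n} c_i(a_2,\ldots,a_m)\,x^i$ with $n=\deg_x f$ and each $c_i\in\mathbb{F}_q[a_2,\ldots,a_m]$, so that $c=(c_0,\ldots,c_n)\colon k^{m-1}\to k^{n+1}$ is a polynomial map and $A=c^{-1}(S_n)$, where $S_n\subseteq k^{n+1}$ is the set of coefficient vectors of polynomials of degree $\le n$ that are perfect squares in $k[x]$. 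It thus suffices to show that $S_n$ is Zariski closed; then $A$ is closed in $k^{m-1}$ as the preimage of a closed set under a morphism.

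To see that $S_n$ is closed, put $\ell=\lfloor n/2\rfloor$. Since $k$ is algebraically closed, every perfect square of degree $\le n$ is the square of a polynomial of degree $\le\ell$, so $S_n$ is exactly the image of the squaring morphism $\mathrm{sq}\colon k^{\ell+1}\to k^{n+1}$ sending the coefficient vector of $g$ to the coefficient vector of $g^2$ (padded with zeros in the top slots). This map is homogeneous of degree $2$, hence is the affine cone over the everywhere-defined morphism of projective spaces $\mathbb{P}^{\ell}\to\mathbb{P}^{2\ell}$, $[G]\mapsto[G^2]$, on binary forms (well-defined since $G\neq 0\Rightarrow G^2\neq 0$). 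A morphism from a projective variety has closed image, and the affine cone over a closed subvariety of $\mathbb{P}^{N}$ is closed in $k^{N+1}$; using once more that every scalar in $k$ is a square, the image of $\mathrm{sq}$ is precisely this affine cone, so $S_n$ is closed. (One should take care that the $x$-degree of $f$ can drop for special parameter values, which is why $S_n$ must collect all perfect squares of degree $\le n$, not of degree exactly $n$; the argument above handles this automatically.)

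Next, since by hypothesis $A\neq k^{m-1}$, Theorem~\ref{theoremld} applied to $A$ (or to each irreducible component of $A$) gives $\dim A<m-1$, i.e.\ $\dim A\le m-2$. Finally, $A$ is defined over $\mathbb{F}_q$ because its defining equations have coefficients in $\mathbb{F}_q$, so the standard bound for the number of rational points of a variety applies: a closed subvariety of $k^{m-1}$ of dimension $\le m-2$ defined over $\mathbb{F}_q$ has $O(q^{m-2})$ points over $\mathbb{F}_q$, with the implied constant depending only on the degree of the variety, hence ultimately on $m$, $d$ and $\deg f$ through the degrees of the polynomials $c_i$. This follows from the Lang--Weil estimate, or from an elementary induction on the dimension. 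Combining these facts yields $\mathrm{Card}(A)=O(q^{m-2})$.

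The main obstacle is the geometric sublemma that being a perfect square is a \emph{closed}, not merely constructible, condition on the coefficients of a polynomial of bounded degree; once this is established, the remainder is dimension bookkeeping plus a black-box point count. A secondary point to watch is the uniformity of the implied constant in $q$, which is exactly what the Lang--Weil / inductive bound provides.
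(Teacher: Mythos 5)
Your proof is correct, and while it shares the paper's basic skeleton (express the coefficients of $f$ as polynomials in the parameters, locate the locus of perfect squares in coefficient space, invoke Theorem~\ref{theoremld} for a dimension drop, and finish with a point count), it executes the key step along a genuinely different route. The paper takes Zariski closures $Z$ and $V$ of two \emph{constructible} sets in the coefficient space $k^{d+1}$, argues $V\cap Z\subsetneqq Z$, deduces $\dim(V\cap Z)\le m-2$, and then passes directly from this bound in coefficient space to $\mathrm{Card}(A)=O(q^{m-2})$ in parameter space --- a transfer that is not justified as written, since the preimage under the coefficient map $c$ of a low-dimensional set need not be low-dimensional. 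You instead prove that the locus $S_n$ of coefficient vectors of perfect squares is genuinely Zariski \emph{closed} (not merely constructible), using the completeness of $\mathbb{P}^{\ell}$ and the fact that every scalar in $k$ is a square, so that $A=c^{-1}(S_n)$ is a closed proper subset of $k^{m-1}$ and Theorem~\ref{theoremld} applies to $A$ itself, yielding $\dim A\le m-2$ exactly where it is needed. This both simplifies the logic and repairs the gap in the paper's own argument, at the cost of the projective-image lemma; note that both proofs ultimately rest on the same standard point-counting input (a closed subvariety of dimension $\le m-2$ defined over $\mathbb{F}_q$ has $O(q^{m-2})$ rational points), which you at least name explicitly, together with the dependence of the implied constant on the degrees of the $c_i$.
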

    
 \begin{proof}
    Suppose that $\deg g = e$, and write
    \[
    g(l_1,\ldots,l_t;x) = c_e x^e + c_{e-1} x^{e-1} + \ldots + c_0,
    \]
    where each coefficient \(c_i\) is a polynomial function of the parameters \(l_1,\ldots,l_t\), denoted by \(g_i(l_1,\ldots,l_t)\). As $(l_1,\ldots,l_t)$ varies, the tuples $(c_0,\ldots,c_e)$ form a constructible subset\footnote{A constructible subset of an algebraic variety is a finite union of locally closed subsets, i.e. subsets of the form $U \cap V^c$ where $U, V$ are Zariski closed.} of $K^{e+1}$~\cite[I.~8, Corollary~2]{zbMATH00193787}. Let $Z\subseteq K^{e+1}$ denote its Zariski closure; then $Z$ is an algebraic variety with $\dim Z \le t$.
    
    Consider the set of polynomials of degree at most\footnote{There may exist special values of $l_1,\ldots,l_t$ such that $c_e=0$.} $e$ that are squares. Any such polynomial can be written as
    \[
    h(x)^2 = g^*(x) = c_e^* x^e + \ldots + c_0^*,
    \]
    where $e' = \left\lfloor e/2 \right\rfloor$, so that $\deg h \le e'$, and
    \[
    h(x) = b_{e'} x^{e'} + \ldots + b_0
    \]
    with $b_i\in K$. Each coefficient $c_i^*$ is a polynomial function of $b_j$, so the tuples $(c_0^*,\ldots,c_e^*)$ form a constructible subset of $K^{e+1}$. Let $V\subseteq K^{e+1}$ denote its Zariski closure; then $V$ is an algebraic variety.
    
    Since $V \subseteq K^{e+1}$, it follows that $V \cap Z \subseteq Z$. Both $V$ and $Z$ are varieties in $K^{e+1}$, so $V \cap Z$ is also a variety in $K^{e+1}$. If $\overline{A} \neq \varnothing$, then $V \cap Z \subsetneqq Z$. By Theorem~\ref{theoremld}, we have $\dim(V \cap Z) < \dim(Z)$. Therefore,
    \[
    \dim(V \cap Z) \le t-1,
    \]
    which implies $\mathrm{Card}\bigl(A\bigr) = O(q^{t-1})$.\qedhere
\end{proof}

\begin{theorem}\label{thm:admissible-nonsquare}
    Let $f(x)\in\mathbb{F}_q[x]$ be an admissible polynomial. Then for every positive integer $k$ the polynomial $f(x^k)$ is not a perfect square in $\mathbb{F}_q[x]$.
    \end{theorem}
    \begin{proof}
        Let $\overline{\mathbb{F}_q}$ denote the algebraic closure of $\mathbb{F}_q$, and let $p$ be the characteristic of it. Factorize $f(x)$ in $\overline{\mathbb{F}_q}[x]$ according to its roots:
        $$
        f(x) = x^r \prod_{i=1}^m (x-\alpha_i)^{e_i},
        $$
        where $\alpha_i \text{s are distinct non-zero roots in } \overline{\mathbb{F}_q}^* $. By admissibility, the polynomial $h(x) = \prod_{i=1}^m (x-\alpha_i)^{e_i}$ satisfies $h(0) \neq 0$ and is not a perfect square in $\overline{\mathbb{F}_q}[x]$; hence at least one exponent $e_i$ is odd.

        For a positive integer \(k\geq 1\), write \(k=p^s t\) with \(s\ge 0\) and \(\gcd(t,p)=1\).  
    For each \(i\), the equation \(\beta^k=\alpha_i\) has exactly \(t\) distinct solutions in \(\overline{\mathbb{F}_q}\), and each solution occurs in \(x^k-\alpha_i\) with multiplicity \(p^s\).  
    Thus
    \[
    f(x^k)
    =
    x^{kr}
    \prod_{i=1}^m
    \prod_{\beta^k=\alpha_i}
    (x-\beta)^{e_i p^s}
    \quad \text{in } \overline{\mathbb{F}_q}[x].
    \]

    Since \(p\) is odd, so the multiplicity \(e_i p^s\) has the same parity as \(e_i\).
    Because some \(e_i\) is odd, the polynomial \(f(x^k)\) has at least one root of odd multiplicity.  
    Hence it cannot be a perfect square in \(\overline{\mathbb{F}_q}[x]\).
    \end{proof}

\begin{proof}[Proof of Lemma~\ref{lemma:epsilon-greater-1}]
    There exists an element $a$ in the union $A_1\cup\dots\cup A_k$ that does not belong to every $A_j$. Define
    \[
    \mathscr{B} = \{ j \mid a \in A_j \}, \qquad \mathscr{C} = \{ j \mid a \notin A_j \}.
    \]
    Both $\mathscr{B}$ and $\mathscr{C}$ are non-empty.
    
    Then we can write
    \[
    R(\varepsilon) = \sum_{a_1,\dots,a_m\in \mathbb{F}_q} \prod_{j\in\mathscr{B}} \chi\left(f\left(\prod_{i\in A_j} a_i\right)\right)
    \prod_{j\in\mathscr{C}} \chi\left(f\left(\prod_{i\in A_j} a_i\right)\right).
    \]
    
    Note that $\text{for all } b\in \mathbb{F}_q^{*}$, the map (without loss of generality, we assume $a = a_1$)
    \[
    (a_1,\dots,a_m) \mapsto \left(\frac{a_1}{b^{d-1}}, a_2 b, \dots, a_m b\right),
    \]
    is a bijection on $\mathbb{F}_q^m$. Hence,
    \[
    R(\varepsilon) = \frac{1}{q-1} \sum_{b\in \mathbb{F}_q^*} \sum_{a_1,\dots,a_m\in \mathbb{F}_q} 
    \prod_{j\in \mathscr{B}} \chi\Big(f\big(\prod_{i\in A_j} a_i\big)\Big)
    \prod_{j\in \mathscr{C}} \chi\Big(f\big(b^d \prod_{i\in A_j} a_i\big)\Big).
    \]
    
    Rearranging the sums over $a_1$ and $b$, we define
    \[
    F(a) = \prod_{j\in\mathscr{B}} f\Big(\prod_{i\in A_j} a_i\Big), \qquad
    G(b) = \prod_{j\in\mathscr{C}} f\Big(b^d \prod_{i\in A_j} a_i\Big),
    \]
    so that
    \[
    R(\varepsilon) = \frac{1}{q-1} \sum_{a_2,\dots,a_m\in \mathbb{F}_q} \left( \sum_{a_1\in \mathbb{F}_q} \chi(F(a)) \right) 
    \left( \sum_{b\in \mathbb{F}_q^*} \chi(G(b)) \right).
    \]
    
    Let
    \[
    \mathscr{D} = \left\{ (a_2, \dots, a_m) \mid F(a) \text{ is a square polynomial over } \overline{\mathbb{F}_q} \right\},
    \]
    \[
    \mathscr{E} = \left\{ (a_2, \dots, a_m) \mid G(b) \text{ is a square polynomial over } \overline{\mathbb{F}_q} \right\}.
    \]
    Note that when choosing a set $A_F\in\mathscr{B}$, let $a_i=1$ for all $i\in A_F\setminus\{1\}$, and $a_i=0$ for all $i\notin A_F$, we have 
    \[
    F(a) = f(a)\cdot f(0)^{\mathrm{Card}(\mathscr{B})-1},
    \]
    which is not a square polynomial over $\overline{\mathbb{F}_q}$. 

    Similarly, when choosing a set $A_G \in \mathscr{C}$, let $a_i = 1$, $\text{for all } i \in A_G$ and $a_i = 0$, $\text{for all } i \notin A_G$. Applying Theorem~\ref{thm:admissible-nonsquare}, we have
    \(
    G(b) = f(b^d) \cdot f(0)^{\mathrm{Card}(\mathscr{C})-1}
    \)
    which is not a square polynomial over $\overline{\mathbb{F}_q}$.
    
    By Corollary~\ref{corld}, we have $\mathrm{Card}(\mathscr{D}) = O(q^{m-2})$ and $\mathrm{Card}(\mathscr{E}) = O(q^{m-2})$.  Hence, $R(\varepsilon) = O(q^{m - 1})$.\qedhere
\end{proof}

\begin{proof}[Proof of Claim~\ref{claim}]
Applying Lemma~\ref{lemma:epsilon1}, and Lemma~\ref{lemma:epsilon-greater-1} to ~\eqref{eq:cardD}, we have
\begin{align*}
    \mathrm{Card}(D)&=q^m+\sum_{\varepsilon\in\{0,1\}^{\binom{m}{d}},\varepsilon\ne 0} R(\varepsilon)\\
    &=q^m+\sum_{\varepsilon\in\{0,1\}^{\binom{m}{d}},\wt{(\varepsilon)}=1} R(\varepsilon)+\sum_{\varepsilon\in\{0,1\}^{\binom{m}{d}},\wt{(\varepsilon)}>1} R(\varepsilon)\\
    &= 
    q^m +
    \begin{cases}
     O\left(q^{m -\frac{1}{2}}\right), & \text{if } \deg(f)\geq 2, \\
     O\left(q^{m -1}\right), & \text{if } \deg(f)=1
    \end{cases}
\end{align*}
Hence we have proven~\eqref{eq:cardD}, and this completes the proof of Theorem~\ref{mainthe}.\qedhere
\end{proof}

\begin{remark}
Extending this approach to more general symmetric polynomial conditions of the form \( f(a_{i_1}, a_{i_2}, \ldots, a_{i_d}) \) being a square, Shparlinski's technique fails, as the necessary separation of variables is no longer possible.

This limitation shows that the product form \( \prod\limits_{k=1}^d a_{i_k} \) is actually necessary for Shparlinski's method to work, since it enables the separation of variables and the application of bounds for character sums. The admissibility condition on $f$ further ensures the existence of examples where both $F$ and $G$ are not perfect squares, so that the non-emptiness condition in Corollary~\ref{corld} is satisfied.

For more general symmetric polynomial conditions, as considered by Kim, Yip, and Yoo \cite{Kim2025}, Shparlinski's approach is no longer directly applicable. Instead, they build on techniques from Slavov’s work\cite{2026Slavov}. Moreover, when $\deg(f)\geq2$, my result can be viewed as a special case of their Theorem~1.7.
\end{remark}

We end with two questions for further study.

\begin{enumerate}
\item Can one improve or adapt Shparlinski's method to more general symmetric polynomials?

\item The implied constant in the error term of~\eqref{mainthe} is not explicit. Can one compute it more precisely, as in \cite{Dujella}?
\end{enumerate}

\subsection*{Acknowledgements}

The author gratefully acknowledges Yaumathcamp and the Yauresearch Project for providing the opportunity to learn and conduct research through this project.

The author would like to thank Igor Shparlinski for inspiring discussions, for suggesting the problem studied in this paper, and for helping to correct the exposition of the article.

The author is also grateful to Andrej Dujella for his valuable comments, encouragement, and for pointing out errors in earlier versions of this paper.

The author would also like to thank Chi Hoi Yip for carefully reading the manuscript, pointing out errors, and for helpful communications regarding the problem.

\bibliographystyle{plain}
\bibliography{ref}

\end{document}